\newtheorem{thm}{Theorem}[section]
\newtheorem{cor}[thm]{Corollary}
\newtheorem{lem}[thm]{Lemma}
\newtheorem{prop}[thm]{Proposition}
\newtheorem{rem}[thm]{Remark}
\newcommand{\tr}{\text{tr}}
\newcommand{\diag}{\text{diag}}
\begin{document}

\title[Schoenberg type inequalities]{Schoenberg type inequalities}

\author[Quanyu Tang]{Quanyu Tang}

\address{School of Mathematics and Statistics, Xi'an Jiaotong University, Xi'an 710049, P. R. China}
\email{tang\_quanyu@163.com}

\subjclass[2020]{Primary 30C15, 30A10, 15A18} 

\keywords{Zero; Critical point; Polynomial; Schoenberg type inequality}

\begin{abstract}
In the geometry of polynomials, Schoenberg's conjecture, now a theorem, is a quadratic inequality between the zeros and critical points of a polynomial whose zeros have their centroid at the origin. We call its generalizations to other orders Schoenberg type inequalities. While inequalities of order four have been previously established, little is known about other orders. In this paper, we present a Schoenberg type inequality of order six, as well as a novel inequality of order one, representing the first known result in the odd-order case. These results partially answer two open problems posed by Kushel and Tyaglov. We also make a connection to Sendov's conjecture. 
\end{abstract}

\maketitle

\section{Introduction} 
The relative location and other relations between the zeros and critical points of a polynomial have been a topic of recurring interest in the geometry of polynomials~\cite{Mar66, QS02, Tao22}. Let  $$p(z)=z^n+a_1z^{n-1}+\cdots+a_{n-1}z+a_n  $$ be a complex polynomial of degree $n\ge 2$. Assume that the zeros of \( p(z) \) are \( z_1, z_2, \ldots, z_n \), and the \emph{critical points} of \( p(z) \), i.e., the zeros of \( p'(z) \), are \( w_1, w_2, \ldots, w_{n-1} \).

In 1986, Schoenberg~\cite{Sch86} conjectured that if the centroid of the zeros of a polynomial lies at the origin, i.e., $\sum_{j=1}^{n} z_{j} = 0$, then
\begin{eqnarray}\label{s0}
\sum_{k=1}^{n-1} \left|w_{k}\right|^2 \le \frac{n-2}{n} \sum_{j=1}^{n} |z_{j}|^2,
\end{eqnarray}
with equality if and only if all the zeros of \( p(z) \) lie on a straight line in the complex plane.
Subsequently, it was shown in~\cite{BIS99, Kat99} that Schoenberg's conjecture is equivalent to the following general form:
\begin{equation}\label{s}
\sum_{k=1}^{n-1}\left|w_{k}\right|^2 \le \frac{n-2}{n}\sum_{j=1}^{n}|z_{j}|^2 + \frac{1}{n^2}\left|\sum_{j=1}^{n}z_{j}\right|^2.
\end{equation}
Inequality~\eqref{s} was first confirmed by Pereira \cite{Per03} and independently by Malamud \cite{Mal04}. 
It is worth noting that this inequality holds without requiring the centroid of the zeros of the polynomial to be at the origin.

In 1999, de Bruin and Sharma~\cite{dS99} conjectured a higher-order Schoenberg type inequality: If $\sum_{j=1}^{n}z_{j}=0$, then 
\begin{eqnarray}\label{bs}
\sum_{k=1}^{n-1}\left|w_{k}\right|^4 \le \frac{n-4}{n}\sum_{j=1}^{n}|z_{j}|^4+\frac{2}{n^2}\left(\sum_{j=1}^{n}|z_{j}|^2\right)^2.
\end{eqnarray}
Inequality~\eqref{bs} was first confirmed by   Cheung and Ng~\cite{CN06} using the D-companion matrix approach. An alternative proof via circulant matrices was given by  Kushel and Tyaglov~\cite{KT16}. In fact, Kushel and Tyaglov established a tighter inequality than~\eqref{bs}: 
\begin{eqnarray}\label{kt}
\sum_{k=1}^{n-1}\left|w_{k}\right|^4 \le \frac{n-4}{n}\sum_{j=1}^{n}|z_{j}|^4+\frac{1}{n^2}\left(\sum_{j=1}^{n}|z_{j}|^2\right)^2+\frac{1}{n^2}\left|\sum_{j=1}^{n}z_{j}^2\right|^2.
\end{eqnarray} In \cite[Section~5]{KT16}, Kushel and Tyaglov further posed an open problem concerning higher even orders. They wrote:
\begin{quote}
\emph{If $p(z)$ is the characteristic polynomial of a circulant matrix $C$, then by Schur's theorem, for the critical points $w_k$ of $p(z)$, one has 
\begin{equation}\label{problem1}
\sum_{k=1}^{n-1}|w_k|^{2m} \leqslant \operatorname{tr}\left(C_{n-1}^m (C_{n-1}^*)^m\right)\footnote{Here, \( C_{n-1} \) denotes the principal submatrix of order \( n - 1 \) of the circulant matrix \( C \), formed by taking its top-left \( (n - 1) \times (n - 1) \) block.}.
\end{equation}
However, even for $m=3$, it is a technically difficult problem to express $\tr\left(C_{n-1}^m (C_{n-1}^*)^m\right)$ in terms of the roots of the polynomial $p(z)$. ... Nevertheless, an estimate for $\sum_{k=1}^{n-1}|w_k|^6$ would be quite interesting.}
\end{quote}They also raised the question of whether Schoenberg type inequalities of odd orders could be established:
\begin{quote}
\emph{It is interesting to know whether it is possible to estimate
\begin{equation}\label{problem2}\displaystyle \sum_{k=1}^{n-1}|w_k|^{2m+1},\end{equation}
for $m=0,1,2,\ldots$.}
\end{quote}
We also note that in~\cite{LXZ21}, Lin, Xie, and Zhang presented a version of the inequality for all orders greater than or equal to $2$. However, their bound is not sharp even in the case of order $4$.

In this paper, we address both of the above questions for specific cases, namely \( m = 3 \) and \( m = 0 \), corresponding to orders \( 6 \) and \( 1 \), respectively.

The paper is organized as follows. In Section~\ref{secc2}, we provide an alternative proof of inequality~\eqref{kt}, and subsequently derive a sharp Schoenberg type inequality of order~$6$. This result resolves the case $m=3$ of the problem stated in~\eqref{problem1}. In Section~\ref{secc3}, we establish a Schoenberg type inequality of order~$1$, thereby addressing the case $m=0$ of the problem posed in~\eqref{problem2}. This is the first known Schoenberg type inequality of odd order, inspired by classical inequalities established in the late 1940s by de Bruijn and Springer~\cite{BS47}, and Erdős and Niven~\cite{EN48}. In Section~\ref{secc4}, we apply Schoenberg's inequality (more precisely, its equivalent form~\eqref{s}) to prove a special case of Sendov's conjecture. Final comments are provided in Section~\ref{secc5}.

\section{A Schoenberg Type Inequality of Order $6$}\label{secc2}

Let \( I \) denote the \( n \times n \) identity matrix, and let \( J \) be the \( n \times n \) matrix whose entries are all equal to one. The following observation, due to Komarova and Rivin~\cite[Lemma~5.7]{KR03}, plays an important role in our derivation.
\begin{lem}[\cite{KR03}]\label{lem1}
Let \( n \ge 2 \) and \( z_1, z_2, \ldots, z_n \in \mathbb{C} \). Let \( p(z) = \prod_{j=1}^n (z - z_j) \), and define \( D = \diag(z_1, z_2, \ldots, z_n) \). Then the characteristic polynomial of the matrix \( D\left(I - \frac{1}{n}J\right) \) has the same set of zeros as the polynomial \( zp'(z) \).
\end{lem}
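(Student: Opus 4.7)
The plan is to compute the characteristic polynomial of $M := D\bigl(I - \tfrac{1}{n}J\bigr)$ directly, by exploiting the rank-one structure of $DJ$. Writing $\mathbf{z} = (z_1,\ldots,z_n)^T$ and $\mathbf{1} = (1,\ldots,1)^T$, we have $DJ = \mathbf{z}\mathbf{1}^T$, so
\[
M = D - \tfrac{1}{n}\mathbf{z}\mathbf{1}^T,
\qquad
\det(zI - M) = \det\!\bigl((zI - D) + \tfrac{1}{n}\mathbf{z}\mathbf{1}^T\bigr).
\]

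The key step is to apply the matrix determinant lemma, $\det(A+uv^T) = \det(A)\bigl(1+v^TA^{-1}u\bigr)$, with $A = zI-D$, $u = \tfrac{1}{n}\mathbf{z}$ and $v = \mathbf{1}$. Since $A$ is diagonal with entries $z - z_j$, this immediately gives
\[
\det(zI-M) \;=\; p(z)\left(1 + \frac{1}{n}\sum_{j=1}^n \frac{z_j}{z-z_j}\right),
\]
valid for all $z$ outside $\{z_1,\ldots,z_n\}$. To finish, I would rewrite the rational sum using $z_j = z - (z-z_j)$ to get $\sum_j z_j/(z-z_j) = z\,p'(z)/p(z) - n$, where I am using the standard logarithmic-derivative identity $p'(z)/p(z) = \sum_j 1/(z-z_j)$. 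Substituting back collapses the expression to $\det(zI-M) = \tfrac{1}{n}\, z\, p'(z)$, from which the claim about coinciding zero sets is immediate.

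There is essentially no serious obstacle: the matrix determinant lemma requires $zI - D$ to be invertible, so the identity is initially established only for $z \notin \{z_1,\ldots,z_n\}$, but both sides are polynomials in $z$, so the equality extends to all $z \in \mathbb{C}$. If one prefers to avoid this mild caveat, an equivalent route is to expand $\det(zI-M)$ along the standard basis using multilinearity in the columns (splitting each column into its diagonal part and its rank-one correction) and again recognize $z p'(z)/n$. Either way, the conclusion is that $\det(zI-M)$ and $zp'(z)$ agree up to the nonzero constant $1/n$, and therefore share the same zero set.
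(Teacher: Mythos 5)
Your proof is correct. The paper itself does not present a proof of this lemma --- it attributes it to Komarova and Rivin \cite{KR03} and remarks only that it ``can be verified by direct computation'' --- so there is no in-paper argument to compare against; the matrix determinant lemma route you chose is a clean and standard way to carry out exactly that verification, and every step (the rank-one factorization $DJ = \mathbf{z}\mathbf{1}^T$, the identity $\sum_j z_j/(z-z_j) = zp'(z)/p(z) - n$, the extension from $z \notin \{z_1,\ldots,z_n\}$ to all $z$ by polynomial continuation) is valid, yielding $\det(zI - M) = \tfrac{1}{n}\,z\,p'(z)$.
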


Lemma~\ref{lem1} can be verified by direct computation. We refer the interested reader to \cite[Theorem 1.1]{CN06} for a similar matrix called D-companion matrix of a polynomial. A generalization of Lemma \ref{lem1} was given in \cite[Theorem 1.2]{CN10}, and has been applied to the study of the zeros of convex combinations of incomplete polynomials; see, for instance,~\cite{DE08, Zha24}.


Let \( A \) be an \( n \times n \) complex matrix. The eigenvalues and singular values of \( A \) are denoted by \( \lambda_j(A) \) and \( \sigma_j(A) \), respectively, for \( j = 1, \ldots, n \). We write \( \operatorname{tr}(A) \) for the trace of \( A \), and \( A^* \) for its conjugate transpose. The matrix \( A \) is said to be normal if \( A^*A = AA^* \). We also use the notation \( |A| := (A^*A)^{1/2} \) for the square root of \( A^*A \).

The following lemma is standard in matrix analysis.\begin{lem}[{\cite[p.~176]{HJ91}}] \label{lem2}  Let $A$ be any $n\times n$ complex matrix. Then for any $r>0$, $$\sum_{j=1}^n|\lambda_j(A)|^r\le \sum_{j=1}^n\sigma_j^r(A)=\tr \left(\left(A^*A\right)^{r/2}\right),$$ with equality  if and only if $A$ is normal \cite[p.~455]{HJ13}. The case $r=2$ of the inequality is called   Schur's inequality \cite[p.~102]{HJ13}. 
\end{lem}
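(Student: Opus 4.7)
The plan breaks into two pieces. For the case $r = 2$ (Schur's inequality), I would apply Schur's triangularization directly. For arbitrary $r > 0$, I would rely on Weyl's multiplicative inequality and a log-majorization argument to transfer the bound from the diagonal case.

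For $r = 2$, Schur's triangularization theorem yields a unitary $U$ with $A = U T U^*$ and $T = \Lambda + N$, where $\Lambda = \diag(\lambda_1(A), \ldots, \lambda_n(A))$ and $N$ is strictly upper triangular. Unitary invariance of the Frobenius norm then gives
$$\sum_{j=1}^n \sigma_j(A)^2 = \tr(A^*A) = \tr(T^*T) = \sum_{j=1}^n |\lambda_j(A)|^2 + \|N\|_F^2,$$
which proves the inequality and characterizes equality by $N = 0$, i.e., $A$ is unitarily diagonalizable, equivalently $A$ is normal.

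For general $r > 0$, the key input is Weyl's multiplicative inequality: with the orderings $|\lambda_1(A)| \ge \cdots \ge |\lambda_n(A)|$ and $\sigma_1(A) \ge \cdots \ge \sigma_n(A)$, one has $\prod_{j=1}^k |\lambda_j(A)| \le \prod_{j=1}^k \sigma_j(A)$ for every $k$, with equality at $k = n$ since $|\det A| = \prod_j |\lambda_j(A)| = \prod_j \sigma_j(A)$. This is precisely log-majorization. Because the function $x \mapsto e^{rx}$ is convex and increasing for every $r > 0$, the Hardy--Littlewood--P\'olya principle promotes log-majorization to weak majorization of the $r$-th powers; summing over $j = 1, \ldots, n$ produces
$$\sum_{j=1}^n |\lambda_j(A)|^r \le \sum_{j=1}^n \sigma_j(A)^r = \tr\bigl((A^*A)^{r/2}\bigr),$$
the final equality being the spectral theorem applied to the Hermitian matrix $|A| = (A^*A)^{1/2}$.

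For the equality case at arbitrary $r$: if $A$ is normal then $A = UDU^*$ with $D$ diagonal, whence $\sigma_j(A) = |\lambda_j(A)|$ and equality is immediate. For the converse, equality of the sums combined with the log-majorization relation and the strict convexity of $e^{rx}$ forces $|\lambda_j(A)| = \sigma_j(A)$ for every $j$. In particular $\sum_j |\lambda_j(A)|^2 = \sum_j \sigma_j(A)^2$, and invoking the $r = 2$ equality case recovers normality. The one nontrivial ingredient beyond triangularization is Weyl's multiplicative inequality itself, which can be obtained by applying the $r = 2$ bound to the exterior powers $\wedge^k A$; since the lemma is cited as standard from Horn and Johnson, this step can reasonably be referenced rather than reproved, and no other serious obstacle arises.
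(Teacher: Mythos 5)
The paper does not prove Lemma~\ref{lem2}; it is stated as a standard fact with citations to Horn and Johnson. Your proof is correct and reproduces essentially the textbook argument found in the cited references: Schur triangularization for the $r=2$ case, then Weyl's product inequality $\prod_{j=1}^k|\lambda_j(A)|\le\prod_{j=1}^k\sigma_j(A)$ promoted to the power-sum inequality via the standard transference principle (log-majorization plus the increasing convex function $t\mapsto e^{rt}$), with the equality case handled by strict convexity reducing to the $r=2$ characterization of normality. One small point worth flagging: when $A$ is singular some $\log|\lambda_j(A)|=-\infty$, so the log-majorization step requires either a limiting argument or the standard convention for $-\infty$; this is a routine technicality and does not affect correctness.
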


With Lemma \ref{lem1} and Lemma \ref{lem2}, inequality~\eqref{kt} follows from a simple calculation. 

\medskip
\noindent
\textit{Proof of~\eqref{kt}.} Let $S=I-\frac{1}{n}J$ and let $A=(SDS)^2$, where $D =\diag(z_1, z_2, \ldots, z_n)$. Note that  $S$ is a projection. Then
\begin{eqnarray*}\sum_{k=1}^{n-1}\left|w_{k}\right|^4   &=&  \sum_{j=1}^n\left|\lambda_j(DS)\right|^4 \qquad \text{(by Lemma \ref{lem1})}  \\ &=&  \sum_{j=1}^n\left|\lambda_j(DS^2)\right|^4
  \\&=&   \sum_{j=1}^n\left|\lambda_j(SDS)\right|^4
    \\&=&  \sum_{j=1}^n|\lambda_j(A)|^2\le \tr \left(A^*A\right) \qquad \text{(by Lemma \ref{lem2})} \nonumber \\&=&   \tr\left(\left(I-\frac{1}{n}J\right)D^*\left(I-\frac{1}{n}J\right)D^*\left(I-\frac{1}{n}J\right)D\left(I-\frac{1}{n}J\right)D\right). 
\end{eqnarray*}
We need to compute the trace explicitly. Since $\sum_{j=1}^{n}z_{j}=0$, it is clear $JDJ=0$. Hence \begin{eqnarray*} \left(I-\frac{1}{n}J\right)D\left(I-\frac{1}{n}J\right)D=D^2-\frac{1}{n}DJD-\frac{1}{n}JD^2. \end{eqnarray*}
For any diagonal matrix \( E \), it holds that \( \operatorname{tr}(EJ) = \operatorname{tr}(E) \) and \( JEJ = (\operatorname{tr}(E))J \). Therefore,
\begin{eqnarray*}&& \tr\left(\left(I-\frac{1}{n}J\right)D^*\left(I-\frac{1}{n}J\right)D^*\left(I-\frac{1}{n}J\right)D\left(I-\frac{1}{n}J\right)D\right)
\\&=&\tr\left( \left(D^{*2}-\frac{1}{n}D^*JD^*-\frac{1}{n}JD^{*2}\right)\left(D^2-\frac{1}{n}DJD-\frac{1}{n}JD^2\right)\right)
\\&=&\tr \left(|D|^4-\frac{1}{n}D^*|D|^2JD-\frac{1}{n}D^{*2}JD^2\right.\\ &&\qquad-\frac{1}{n}D^*J|D|^2D+\frac{1}{n^2}D^*J|D|^2JD \\&&\qquad\left.-\frac{1}{n}J|D|^4+\frac{1}{n^2}JD^*|D|^2JD+\frac{1}{n^2}JD^{*2}JD^2\right)\\&=&\frac{n-4}{n}\sum_{j=1}^{n}|z_{j}|^4+\frac{1}{n^2}\left(\sum_{j=1}^{n}|z_{j}|^2\right)^2+\frac{1}{n^2}\left|\sum_{j=1}^{n}z_{j}^2\right|^2,  \end{eqnarray*} as desired. \qed

\begin{rem}In the preceding proof, if we instead take $A=\left(D\left(I-\frac{1}{n}J\right)\right)^2$, then we obtain only
\begin{eqnarray*} 
\sum_{k=1}^{n-1}\left|w_{k}\right|^4 \le \tr \left(A^*A\right)=\frac{n-3}{n}\sum_{j=1}^{n}|z_{j}|^4+\frac{1}{n^2}\left(\sum_{j=1}^{n}|z_{j}|^2\right)^2,
\end{eqnarray*}
which is even weaker than inequality~\eqref{bs}. This highlights the importance of choosing an appropriate matrix formulation.
\end{rem}

The main result of this section is the following Schoenberg type inequality of order $6$. 
\begin{thm}\label{st6}
Let \( z_1, z_2, \ldots, z_n \) be the zeros of \( p(z) \), and let \( w_1, w_2, \ldots, w_{n-1} \) be its critical points. Assume further that $\sum_{j=1}^n z_j=0$. Then 
\begin{align*}
\sum_{k=1}^{n-1} |w_k|^6 \leq\ 
&\frac{n - 6}{n} \sum_{j=1}^n |z_j|^6
+ \frac{6}{n^2} \left( \sum_{j=1}^n |z_j|^4 \right) \left( \sum_{j=1}^n |z_j|^2 \right) \\
&+ \frac{3}{n^2} \left| \sum_{j=1}^n z_j |z_j|^2 \right|^2
- \frac{2}{n^3} \left( \sum_{j=1}^n |z_j|^2 \right)^3, \tag{$\star$}
\end{align*}
with equality if and only if all \( z_j \) lie on a straight line in the complex plane.
\end{thm}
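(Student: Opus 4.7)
The plan is to mirror the alternative proof of~\eqref{kt}: pick an auxiliary matrix $A$ whose eigenvalues encode $\sum_{k=1}^{n-1}|w_k|^6$ after squaring, apply Schur's inequality (Lemma~\ref{lem2} with $r=2$), and expand $\tr(A^*A)$ explicitly. The natural choice is $A = (SDS)^3$, where $S = I - \tfrac{1}{n}J$ and $D = \diag(z_1,\ldots,z_n)$. By Lemma~\ref{lem1}, the projection identity $S^2 = S$, and the fact that $XY$ and $YX$ share nonzero eigenvalues, the nonzero eigenvalues of $DS$, $SDS$, and $A$ are $\{w_k\}$, $\{w_k\}$, and $\{w_k^3\}$, respectively, so
$$\sum_{k=1}^{n-1}|w_k|^6 \;=\; \sum_{j=1}^n |\lambda_j(A)|^2 \;\le\; \tr(A^*A).$$

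The heart of the proof is then the explicit computation of $\tr(A^*A) = \tr\bigl((SDS)^{*3}(SDS)^3\bigr)$ under the hypothesis $\sum_j z_j = 0$. Using $S=S^*$, $S^2=S$, and cyclicity of the trace, this expression collapses to that of the $12$-letter cyclic word $SD^*SD^*SD^*SDSDSD$, in which the six diagonal letters (three $D^*$'s followed by three $D$'s) alternate with six copies of $S$. Substituting $S = I - J/n$ at each slot produces $2^6 = 64$ summands, indexed by the subset $T$ of positions carrying a $J$, each bearing the sign $(-1/n)^{|T|}$. The centroid hypothesis, equivalent to $JDJ = 0$ together with $\tr D = 0$, annihilates every configuration in which two $J$-slots are separated by only a single diagonal factor; pigeonhole on a $6$-cycle forces this whenever four or more $J$'s are present, so only configurations with $|T| \in \{0,1,2,3\}$ can survive---$1 + 6 + 9 + 2 = 18$ in total. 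Each surviving configuration is evaluated via the standard identities $JEJ = \tr(E)\,J$ and $\tr(EJ) = \tr(E)$ for diagonal $E$, producing a product of power sums of the form $\sum z_j^a\bar z_j^b$. The no-$J$ and single-$J$ contributions already combine to $\tfrac{n-6}{n}\sum|z_j|^6$, matching the first term of the theorem.

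The main obstacle is then showing that the 2-$J$ and 3-$J$ contributions, after cyclic-equivalence grouping and absorption of the auxiliary mixed power-sums via Cauchy--Schwarz-type bounds (e.g.\ $|\sum z_j^2| \le \sum|z_j|^2$), collapse to exactly $\tfrac{6}{n^2}(\sum|z_j|^4)(\sum|z_j|^2) + \tfrac{3}{n^2}\left|\sum_j z_j|z_j|^2\right|^2 - \tfrac{2}{n^3}(\sum|z_j|^2)^3$. The precise coefficients are highly sensitive to the placement of the $S$'s, as the remark following~\eqref{kt} already foreshadows: an alternatively bracketed cubic yields a strictly weaker inequality. For the equality assertion, if the $z_j$ lie on a common line through the origin---forced by the vanishing centroid---then $D^* = e^{-2i\theta}D$ for some real $\theta$, so $A^* = e^{-6i\theta}A$ is a scalar multiple of $A$, making $A$ normal and Schur's inequality tight, and all auxiliary absorptions become equalities as well. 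Conversely, equality forces $A=(SDS)^3$ to be normal; combining this with the equality analysis for the second-order Schoenberg inequality~\eqref{s} (as in~\cite{Per03, Mal04}) applied to $SDS$ recovers the collinearity of the $z_j$.
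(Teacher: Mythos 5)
Your setup is right in spirit — Lemmas~\ref{lem1} and~\ref{lem2}, the reduction to a cyclic word in $S$, $D$, $D^*$, the observation that $JDJ=0$ kills any configuration with an arc of length one, and the count $1+6+9+2=18$ of surviving configurations are all correct — but the choice of auxiliary matrix is not, and that choice makes the claimed expansion wrong. With $A=(SDS)^3$ and Schur's inequality, what you compute is
\[
\tr(A^*A)=\tr\bigl((SD^*S)^3(SDS)^3\bigr)=\tr\bigl((B^*)^3B^3\bigr),\qquad B:=SDS,
\]
whose value under $\sum_j z_j=0$ is the expression $(\star\star)$ in the paper's Remark, \emph{not} the right-hand side of $(\star)$: it contains $\tfrac{1}{n^2}\bigl|\sum z_j^3\bigr|^2$, $\tfrac{4}{n^2}\Re\{(\sum z_j^2|z_j|^2)(\sum\bar z_j^2)\}$, and $-\tfrac{2}{n^3}(\sum|z_j|^2)\bigl|\sum z_j^2\bigr|^2$ in place of the corresponding terms of $(\star)$, and the coefficient of $(\sum|z_j|^4)(\sum|z_j|^2)$ is $2/n^2$, not $6/n^2$. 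The $2$-$J$ and $3$-$J$ terms therefore do \emph{not} ``collapse to exactly'' the terms of $(\star)$; the algebraic identity you assert at that step is false. What is true is that your $\tr(A^*A)$ is a strictly tighter bound than $(\star)$, by the Horn-type inequality $\tr\bigl((B^*)^3B^3\bigr)\le\tr\bigl((B^*B)^3\bigr)$, but you would then still have to prove the scalar inequality $(\star\star)\le(\star)$ separately, and ``Cauchy--Schwarz-type bounds'' is too vague: the rearranged form involves a positive term $\tfrac{2}{n}P_2\bigl(P_2^2-|\sum z_j^2|^2\bigr)$ on the left, so the naive $|\sum z_j^2|\le\sum|z_j|^2$ bound goes the wrong way. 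The paper sidesteps all of this by taking $A=SDS$ itself (not its cube) and invoking Lemma~\ref{lem2} with $r=6$, which gives $\sum|w_k|^6\le\tr\bigl((A^*A)^3\bigr)=\tr\bigl((SD^*SD)^3\bigr)$; that trace expands directly to the right-hand side of $(\star)$.

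The equality analysis has a parallel gap. With your $A=(SDS)^3$, equality in Schur forces $(SDS)^3$ to be normal, not $SDS$ itself, and normality of a cube does not in general imply normality of the base (consider a nilpotent of index $\le 3$). You also have a second equality condition to track, namely equality in whatever bound links $\tr(A^*A)$ to $(\star)$. The paper's choice avoids both issues: with $A=SDS$, equality in Lemma~\ref{lem2} at $r=6$ holds iff $A$ is normal, i.e.\ $SD^*SDS=SDSD^*S$, which simplifies to $D^*JD=DJD^*$, i.e.\ $\bar z_iz_j=\bar z_jz_i$ for all $i\ne j$ — collinearity.

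So: either switch to $A=SDS$ and use $r=6$ in Lemma~\ref{lem2} (the paper's route, yielding $(\star)$ directly), or, if you insist on your $A=(SDS)^3$, carry out the expansion honestly to obtain $(\star\star)$, then cite the Horn inequality $\tr\bigl((B^*)^3B^3\bigr)\le\tr\bigl((B^*B)^3\bigr)$ to pass to $(\star)$, and handle the two-layered equality condition.
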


\begin{proof}
Let \( A = SDS \), where \( S = I - \frac{1}{n}J \) and \( D = \operatorname{diag}(z_1, z_2, \ldots, z_n) \). By Lemmas~\ref{lem1} and~\ref{lem2} with \( r = 6 \), we obtain
\[
\sum_{k=1}^{n-1}\left|w_{k}\right|^6 = \sum_{j=1}^n|\lambda_j(A)|^6 \le \operatorname{tr}\left((A^*A)^3\right) = \operatorname{tr}\left((SD^*SDS)^3\right)= \operatorname{tr}\left((SD^*SD)^3\right).
\]
Expanding \( \operatorname{tr}\left((S D^* S D)^3\right) \) yields the right-hand side of inequality~($\star$) as stated in the theorem. The detailed computation is provided in the Appendix.

By Lemma \ref{lem2}, equality holds if and only if $A$ is normal, i.e., 
\begin{eqnarray}\label{normal}
SD^*SDS=SDSD^*S.
\end{eqnarray} After simplification, (\ref{normal}) becomes $D^*JD=DJD^*$, that is $\bar{z}_iz_j=\bar{z}_jz_i$ for all $i\ne j$, which means all zeros of $p(z)$ lie on a straight line.  \end{proof}

 \begin{rem}
 In fact, one can obtain a tighter, though more complicated, inequality than that in Theorem~\ref{st6}, as described below. In the proof of Theorem~\ref{st6}, if we instead apply Schur's inequality directly to $A^3$, we obtain the bound \(\sum_{k=1}^{n-1}\left|w_{k}\right|^6 = \sum_{j=1}^n|\lambda_j(A^3)|^2 \le \operatorname{tr}\left( (A^*)^3 A^3 \right).\) This inequality is indeed tighter, as it follows from a well-known consequence of Horn's inequalities that \(\operatorname{tr}\left( (A^*)^3 A^3 \right) \le \operatorname{tr}\left( (A^*A)^3 \right)\); see, for example, \cite[p.~177, p.~190]{HJ91}. Let \( \Re z \) denote the real part of a complex number. Then the upper bound \( \operatorname{tr}\left( (A^*)^3 A^3 \right) \) can be computed explicitly. \begin{align*}
\operatorname{tr}\left( (A^*)^3 A^3 \right) = \ & \frac{n-6}{n}   \sum_{j=1}^{n} |z_j|^6   + \frac{2}{n^2} \left( \sum_{j=1}^{n} |z_j|^4 \right) \left( \sum_{j=1}^{n} |z_j|^2 \right)\\& +\frac{4}{n^2} \Re \left\{ \left( \sum_{j=1}^{n} z_j^2 |z_j|^2 \right) \left( \sum_{j=1}^{n} \bar{z}_j^2 \right) \right\}  + \frac{2}{n^2} \left| \sum_{j=1}^{n} z_j |z_j|^2 \right|^2 \\
& + \frac{1}{n^2} \left| \sum_{j=1}^{n} z_j^3 \right|^2 - \frac{2}{n^3} \left( \sum_{j=1}^{n} |z_j|^2 \right) \left| \sum_{j=1}^{n} z_j^2 \right|^2. \tag{$\star\star$}
\end{align*}The detailed computation leading to \((\star\star)\) is provided in the Appendix. The heavy computation and the complicated formula of the upper bound obviously stop us from finding higher order Schoenberg type inequalities using this approach. 
\end{rem}

\section{A Schoenberg Type Inequality of Order $1$}\label{secc3}

Let \( \bold{v} = (v_1, \ldots, v_m) \), where \( v_1, \ldots, v_m \in \mathbb{C} \). We denote by \( |\bold{v}| = (|v_1|, \ldots, |v_m|) \) the entrywise modulus of \( \bold{v} \), and by \( e_k(\bold{v}) \) the \( k \)th elementary symmetric function of \( v_1, \ldots, v_m \) (see, e.g.,~\cite[p.~54]{HJ13}).

Let \( p(z) = \prod_{k=1}^n (z - z_k) = z^n + \sum_{k=1}^n (-1)^k e_k(\bold{z}) z^{n-k} \). Then its derivative is
\[
p'(z) = nz^{n-1} + \sum_{k=1}^{n-1} (n-k)(-1)^k e_k(\bold{z}) z^{n-1-k}.
\]
On the other hand, since \( p'(z) = n \prod_{k=1}^{n-1} (z - w_k) = nz^{n-1} + n \sum_{k=1}^{n-1} (-1)^k e_k(\bold{w}) z^{n-1-k} \), comparing coefficients yields
\[
e_k(\bold{w}) = \frac{n-k}{n} e_k(\bold{z}), \qquad k = 1, \ldots, n-1.
\] In particular, the case \( k = 1 \) gives the well-known identity:
\[
\frac{1}{n-1} \sum_{k=1}^{n-1} w_k = \frac{1}{n} \sum_{j=1}^n z_j,
\]
i.e., the arithmetic mean of the critical points equals that of the zeros.

de Bruijn and Springer~\cite[Theorem~4]{BS47}, and independently Erdős and Niven~\cite{EN48}, showed that
\begin{equation}\label{bsen}
\sum_{k=1}^{n-1} |w_k| \le \frac{n-1}{n} \sum_{j=1}^n |z_j|.
\end{equation}
In our notation, this is
\begin{equation}\label{extextext}
e_1(|\bold{w}|) \le \frac{n-1}{n} e_1(|\bold{z}|).
\end{equation} 

The following result provides an extension of inequality~\eqref{extextext} to all elementary symmetric functions:

\begin{thm}\label{ek}
Let \( z_1, z_2, \ldots, z_n \) be the zeros of \( p(z) \), and let \( w_1, w_2, \ldots, w_{n-1} \) be its critical points. Then 
 \begin{eqnarray*}
   e_k(|\bold{w}|)\le\frac{n-k}{n} e_k(|\bold{z}|), \qquad  k=1, \ldots, n-1.
   \end{eqnarray*}  
\end{thm}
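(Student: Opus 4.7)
The plan is to reduce the theorem to a real-variable identity by comparing the critical points of $p$ with those of the ``modulus polynomial''
\[
\tilde p(z) := \prod_{j=1}^n (z - |z_j|),
\]
and then to run a matrix argument at the compound-matrix level.

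First I would note that $\tilde p$ has non-negative real zeros, so by Rolle's theorem its critical points $\tilde w_1,\ldots,\tilde w_{n-1}$ are real and non-negative. Applying the coefficient identity $e_k(\bold w) = \tfrac{n-k}{n}\, e_k(\bold z)$ derived at the start of Section~\ref{secc3}, but with $p$ replaced by $\tilde p$, yields
\[
e_k(\tilde w_1,\ldots,\tilde w_{n-1}) = \tfrac{n-k}{n}\, e_k(|z_1|,\ldots,|z_n|), \qquad k=1,\ldots,n-1.
\]
It therefore suffices to show $e_k(|\bold w|) \le e_k(\tilde w)$.

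Next, set $D=\diag(z_1,\ldots,z_n)$, $|D|=\diag(|z_1|,\ldots,|z_n|)$, $S=I-\tfrac1n J$, and form the D-companion matrices $M=DS$ and $\tilde M=|D|S$; by Lemma~\ref{lem1} their nonzero eigenvalues are the $w_k$'s and the $\tilde w_k$'s respectively. Writing the entrywise polar decomposition $D = U|D|$, where $U = \diag(e^{i\theta_1},\ldots,e^{i\theta_n})$ is the diagonal unitary of phases ($z_j = |z_j|e^{i\theta_j}$), the matrix $M = U|D|S$ satisfies, after two uses of $\lambda(XY)=\lambda(YX)$ together with the fact that $U$ commutes with $|D|^{1/2}$,
\[
\lambda(M) \;=\; \lambda\bigl(|D|^{1/2} S\, U |D|^{1/2}\bigr) \;=\; \lambda(A\,U),
\]
where $A:=|D|^{1/2}S|D|^{1/2}$ is Hermitian positive semidefinite and has the same eigenvalues as $\tilde M$.

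The theorem would then follow by applying the $r=1$ case of Lemma~\ref{lem2} to the $k$-th compound matrix $\wedge^k(AU)$, whose eigenvalues are the $\binom{n}{k}$ products $\prod_{i\in I}\lambda_i(AU)$ and whose singular values are the analogous products $\prod_{i\in I}\sigma_i(AU)$, $|I|=k$. Summing absolute values gives
\[
e_k(|\bold w|) \;=\; e_k\bigl(|\lambda(AU)|\bigr) \;\le\; e_k\bigl(\sigma(AU)\bigr) \;=\; e_k\bigl(\sigma(A)\bigr) \;=\; e_k\bigl(\lambda(A)\bigr) \;=\; e_k(\tilde w),
\]
where I have used the unitary invariance $\sigma(AU)=\sigma(A)$ and the identity $\sigma(A)=\lambda(A)$ valid for any Hermitian PSD matrix. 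Combined with the identity for $e_k(\tilde w)$, this proves the theorem. The step I expect to be the crux is the cyclic identity $\lambda(M) = \lambda(AU)$: it re-expresses the eigenvalues of the non-normal matrix $M$ as those of a unitary ``rotation'' of a PSD matrix, which is precisely the structure that lets the compound-matrix Weyl-type bound of Lemma~\ref{lem2} do all the work.
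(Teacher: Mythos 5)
Your proof is correct, and it takes a genuinely different route from the paper's. The paper cites Pereira's weak log-majorization theorem (Theorem~\ref{t1}) as a black box, which gives $\prod_{j\le k}|w_j|\le\prod_{j\le k}\xi_j$, and then converts this log-majorization into the elementary symmetric function inequality via a Schur-convexity argument (Corollary~\ref{cc1}, using Roventa's result that $e_k(e^{t_1},\ldots,e^{t_{n-1}})$ is Schur-convex). You instead bypass the log-majorization statement entirely: starting from Lemma~\ref{lem1}, you write the D-companion matrix as $DS = U|D|S$, use two cyclic passes and the commutativity of diagonal matrices to show $\lambda(DS) = \lambda(AU)$ with $A = |D|^{1/2}S|D|^{1/2}$ Hermitian PSD and $U$ diagonal unitary, then feed the $k$-th compound $\wedge^k(AU)$ into the $r=1$ case of Lemma~\ref{lem2}. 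This directly yields $e_k(|\lambda(AU)|)\le e_k(\sigma(AU))=e_k(\sigma(A))=e_k(\lambda(A))$, which, after discarding the spurious zero eigenvalue and using the coefficient identity $e_k(\tilde{\bold w})=\tfrac{n-k}{n}e_k(|\bold z|)$, is precisely the theorem. The payoff is a self-contained argument requiring only Lemma~\ref{lem1} and Lemma~\ref{lem2} (and standard compound-matrix facts), rather than two external citations; the trade-off is that you prove only the $e_k$ inequality, whereas Pereira's log-majorization is a strictly stronger intermediate statement that the paper could reuse elsewhere. One small detail worth making explicit when you write this up: $\lambda(AU)$ includes the extra eigenvalue $0$ (coming from the rank-deficiency of $S$), but since $e_k(0, x_1,\ldots,x_{n-1}) = e_k(x_1,\ldots,x_{n-1})$ for $k\le n-1$, this does not affect the conclusion.
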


Indeed, Malamud~\cite[Theorem~4.10]{Mal04} proved a result more general than Theorem~\ref{ek}. Here, we provide an alternative proof of Theorem~\ref{ek}. Our approach is based on a weak log-majorization result, originally proved by Cheung and Ng~\cite[Theorem~2.1]{CN06} under the assumption that \( p(0) = 0 \), and later extended by Pereira~\cite[Theorem~2.1]{Per07} to arbitrary polynomials. The version we use is stated as Theorem~\ref{t1} below. A proof via the circulant matrix method can be found in~\cite{LXZ21}. We refer the reader to~\cite{MOA11} for basic theory of majorization.

 \begin{thm}[\cite{Per07}]\label{t1}  Let $p(z)$ be a polynomial of degree $n\ge 2$ with zeroes $z_j$, $j = 1, \ldots, n$, and
  critical points $w_j$, $j = 1, \ldots, n-1$, listed in descending order of modulus. If $q(z)$ is a polynomial with zeroes $|z_j|$, $j = 1, \ldots, n$,  and critical points
  $\xi_j$, $j = 1, \ldots, n-1$,  listed in descending order of modulus,  then 
   	\begin{eqnarray*} \prod_{j=1}^{k}|w_j|\le \prod_{j=1}^{k}\xi_j, \quad k=1,\ldots,n-1.  	\end{eqnarray*}
  \end{thm}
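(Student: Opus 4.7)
The plan is to pass from polynomials to matrices via Lemma~\ref{lem1}, applied in parallel to $p$ and to $q$. Set $D=\diag(z_1,\dots,z_n)$, $D_+=\diag(|z_1|,\dots,|z_n|)$, and $S=I-\tfrac{1}{n}J$. By Lemma~\ref{lem1} the eigenvalues of $DS$ are $w_1,\dots,w_{n-1},0$, and those of $D_+S$ are $\xi_1,\dots,\xi_{n-1},0$. Since $q(z)=\prod_j(z-|z_j|)$ has only real non-negative zeros, its critical points are themselves real and non-negative (by Rolle's theorem, or by Gauss--Lucas). Moreover, via the cyclic-trace trick, $D_+S$ is similar to the Hermitian positive semidefinite matrix $D_+^{1/2}SD_+^{1/2}$, so the ordered $\xi_j$ are precisely the ordered eigenvalues of this PSD matrix.

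Next, the polar decomposition $D=UD_+$, with $U$ the diagonal unitary carrying the phases of the $z_j$'s, gives $DS=U\cdot(D_+S)$. Writing $X:=D_+S$, the theorem reduces to the weak log-majorization
\[
\prod_{j=1}^{k}|\lambda_j(UX)|\ \le\ \prod_{j=1}^{k}\lambda_j(X),\qquad k=1,\dots,n-1,
\]
with the eigenvalues on both sides arranged in decreasing order of modulus. Passing to $k$th compound matrices, one has $C_k(UX)=C_k(U)\,C_k(X)$, in which $C_k(U)$ is again diagonal unitary and $C_k(X)$ is similar to the PSD matrix $C_k(D_+^{1/2}SD_+^{1/2})$. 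The $k$-fold product inequality then reduces, for every $k$, to the spectral-radius estimate
\[
\rho\bigl(V\,C_k(X)\bigr)\ \le\ \rho\bigl(C_k(X)\bigr),\qquad V:=C_k(U).
\]

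The main obstacle is precisely this spectral-radius estimate. Weyl's inequality yields only $|\lambda_j(UX)|\le\sigma_j(UX)=\sigma_j(X)$, which goes in the wrong direction since $X$ is non-normal and $\sigma_j(X)$ strictly exceeds $\xi_j$ in general; thus a direct singular-value comparison is doomed. The required bound must exploit the projection structure of $S$ (namely $S=S^*=S^2$ and $S\mathbf{1}=0$), or equivalently the factorization $X=D_+S$ with $D_+$ diagonal non-negative and $S$ an orthogonal projection. The approach I would pursue is a continuous deformation: interpolate $U$ along a smooth path $U_t$ of diagonal unitaries from the identity to the given $U$, track the largest-modulus eigenvalue of $U_t\,C_k(X)$, and show it cannot exceed its initial value $\rho(C_k(X))$ by a Perron-type argument based on the PSD matrix $C_k(D_+^{1/2}SD_+^{1/2})$ and a Cauchy--Schwarz comparison of the associated (generalized) eigenvector against the leading eigenvector of that PSD compound.
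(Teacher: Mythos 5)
Your reduction is sound as far as it goes, and it mirrors how this result is approached in the literature (note that the paper does not prove Theorem~\ref{t1} at all --- it cites Pereira~\cite{Per07}, with the case $p(0)=0$ due to Cheung--Ng~\cite{CN06} and a circulant proof in~\cite{LXZ21}): by Lemma~\ref{lem1} the eigenvalues of $DS$ and $D_+S$ are $w_1,\dots,w_{n-1},0$ and $\xi_1,\dots,\xi_{n-1},0$, and passing to $k$th compounds the claim becomes $\rho\bigl(C_k(U)C_k(X)\bigr)\le\rho\bigl(C_k(X)\bigr)$ with $X=D_+S$. The genuine gap is that this last inequality \emph{is} the theorem, and the mechanism you propose for it is not a proof. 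First, the bare statement ``$\rho(VY)\le\rho(Y)$ for $V$ diagonal unitary and $Y$ similar to PSD'' is false: take $Y=\left(\begin{smallmatrix}2&-1\\2&-1\end{smallmatrix}\right)$, which is similar to $\diag(1,0)$, and $V=\diag(1,-1)$; then $VY$ has trace $3$ and determinant $0$, so $\rho(VY)=3>1=\rho(Y)$. So nothing has been reduced; everything must come from the specific structure of $C_k(D_+)C_k(S)$, which your sketch never actually uses. Second, a ``Perron-type'' comparison has no foothold here: Perron--Frobenius arguments need entrywise nonnegativity (or at least a distinguished nonnegative eigenvector), but the entries of $C_k(S)$ are $k\times k$ minors of $S=I-\frac1nJ$ and take both signs, so the leading eigenvector of $C_k(D_+^{1/2}SD_+^{1/2})$ carries no sign structure against which a Cauchy--Schwarz comparison of the perturbed eigenvector can be run. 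Third, the homotopy step is circular: along $U_t$ the largest eigenvalue modulus of $U_tC_k(X)$ is not monotone, and to certify that it never exceeds $\rho(C_k(X))$ you would need exactly the inequality being proved, for every $t$; continuity supplies no barrier.

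For comparison, the standard way to close this gap is algebraic rather than analytic. Since the compression of $D$ to $\mathbf{1}^\perp$ has characteristic polynomial $p'/n$, Schur triangularization (with the $k$ largest-modulus critical points placed first) produces an $n\times k$ isometry $Q$ with $Q^*\mathbf{1}=0$ and $\prod_{j=1}^k|w_j|=|\det(Q^*DQ)|$. Writing $D=UD_+$ and $M=D_+^{1/2}Q$, one has $Q^*DQ=M^*UM$, and Horn's product inequality for singular values gives $|\det(M^*UM)|\le\prod_{j=1}^k\sigma_j(M)^2=\det(Q^*D_+Q)$; finally $\det(Q^*D_+Q)=\det(Q^*SD_+SQ)\le\prod_{j=1}^k\lambda_j(SD_+S)=\prod_{j=1}^k\xi_j$ by Cauchy interlacing, since $SD_+S$ is positive semidefinite with eigenvalues $\xi_1,\dots,\xi_{n-1},0$. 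This is essentially the Cheung--Ng/Pereira argument; either supply a step of this kind in place of the deformation, or simply cite \cite{Per07} as the paper does.
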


The following consequence of Theorem~\ref{t1} will be used in our argument.

 \begin{cor}\label{cc1}  With the same condition as in Theorem \ref{t1}, we have \begin{eqnarray*} e_k(|\bold{w}|)\le e_k(\boldsymbol{\xi}), \qquad k=1, \ldots, n-1.	\end{eqnarray*}
  \end{cor}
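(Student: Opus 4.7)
The plan is to derive Corollary~\ref{cc1} from Theorem~\ref{t1} via a standard principle from the theory of majorization: weak log-majorization of nonnegative sequences implies the analogous inequality $e_k(a) \le e_k(b)$ for every elementary symmetric function. With $|\bold{w}|$ and $\boldsymbol{\xi}$ listed in decreasing order as in Theorem~\ref{t1}, the chain $\prod_{j=1}^{k}|w_j| \le \prod_{j=1}^{k}\xi_j$ for $k=1,\ldots,n-1$ is precisely the statement that $|\bold{w}|$ is weakly log-majorized by $\boldsymbol{\xi}$.

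First I would reduce to the case of strictly positive entries by replacing $|w_j|$ with $|w_j|+\varepsilon$ and $\xi_j$ with $\xi_j+\varepsilon$; both perturbations preserve the decreasing order and the weak log-majorization, and the desired inequality passes to the $\varepsilon\to 0^+$ limit by continuity of $e_k$. Taking logarithms, set $\alpha_j := \log|w_j|$ and $\beta_j := \log \xi_j$, so that the weak log-majorization becomes the usual weak majorization
\[
\sum_{j=1}^{k}\alpha_j \le \sum_{j=1}^{k}\beta_j, \qquad k=1,\ldots,n-1,
\]
with both sequences sorted in decreasing order.

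Next, define $f \colon \mathbb{R}^{n-1} \to \mathbb{R}$ by
\[
f(x_1,\ldots,x_{n-1}) \;=\; e_k\bigl(e^{x_1},\ldots,e^{x_{n-1}}\bigr) \;=\; \sum_{|S|=k}\exp\Bigl(\sum_{j\in S} x_j\Bigr).
\]
This function is symmetric in its arguments, strictly increasing in each coordinate, and a sum of exponentials of linear forms, hence convex. A symmetric convex function is Schur-convex, and a Schur-convex function that is monotone in each coordinate preserves weak majorization (see, e.g., \cite[Ch.~3]{MOA11}); applying this principle to $\alpha$ and $\beta$ yields $f(\alpha) \le f(\beta)$, which is exactly $e_k(|\bold{w}|) \le e_k(\boldsymbol{\xi})$.

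I do not foresee a genuine obstacle here: the argument simply packages a well-known pathway from weak log-majorization to weak majorization of elementary symmetric polynomials. The only mild care is in the approximation step that handles possibly vanishing moduli, and in invoking the correct version of the Schur-convexity principle with monotonicity.
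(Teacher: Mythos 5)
Your proof is correct and takes essentially the same route as the paper: convert weak log-majorization to weak majorization by taking logarithms (after a perturbation to handle zero moduli), then exploit Schur-convexity and coordinatewise monotonicity of $(t_1,\ldots,t_{n-1})\mapsto e_k(e^{t_1},\ldots,e^{t_{n-1}})$. The only cosmetic difference is that the paper unpacks the weak-majorization step by hand, constructing an explicit intermediate vector $\mathbf{x}\le\boldsymbol{\xi}$ with $\log|\mathbf{w}|\prec\log\mathbf{x}$, and cites Roventa for the Schur-convexity where you give a short direct argument (sum of exponentials of linear forms is symmetric and convex).
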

\begin{proof}Without loss of generality, we assume $|w_{n-1}|>0$. The general case follows by a continuity argument. We may choose an entrywise positive vector $\bold{x}=(x_1, \ldots, x_{n-2}, x_{n-1})$, where $x_{j}=\xi_j$ for $j=1, \ldots, n-2$, such that $\prod_{j=1}^{n-1}|w_j|= \prod_{j=1}^{n-1}x_j$.  Then  $(\log |w_1|, \ldots, \log |w_{n-1}|)$ is majorized by  $(\log x_1, \ldots, \log x_{n-1})$. By \cite[Theorem 9]{Rov12}, $e_{k}(e^{t_1}, \ldots, e^{t_{n-1}})$ is Schur convex over $t_i\in  \mathbb{R}$, we have  $e_k(|\bold{w}|)\le e_k(\bold{x})$. Noting that $e_k:\mathbb{R}^{n-1}_+\to \mathbb{R}$ is monotone increasing  with respect to the entries, we get $e_k(\bold{x})\le e_k(\boldsymbol{\xi})$, since clearly we have $x_{n-1}\le \xi_{n-1}$.  \end{proof}

We are now ready to prove Theorem~\ref{ek}.

\begin{proof}[Proof of Theorem~\ref{ek}]
The result follows immediately from Corollary~\ref{cc1} and the identity
\[
e_k(\boldsymbol{\xi}) = \frac{n - k}{n} e_k(|\boldsymbol{z}|), \qquad k = 1, \ldots, n - 1.
\]
\end{proof}

The highlighted result of this section is the following Schoenberg type inequality of order~$1$, which is stronger than~\eqref{bsen} under an additional condition on the centroid of the zeros.

\begin{thm} 
Let \( z_1, z_2, \ldots, z_n \) be the zeros of \( p(z) \), and let \( w_1, w_2, \ldots, w_{n-1} \) be its critical points. Assume further that $\sum_{j=1}^n z_j=0$, then 
 \begin{eqnarray}\label{st1}
\sum_{k=1}^{n-1}\left|w_k\right| \leq   \sqrt{\frac{n-2}{n}} \sum_{j=1}^n\left|z_j\right|.\end{eqnarray}
 \end{thm}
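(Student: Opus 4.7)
The plan is to combine Schoenberg's inequality with the $k=2$ case of Theorem~\ref{ek} via the elementary algebraic identity
\[
\Bigl(\sum_{k=1}^{n-1}|w_k|\Bigr)^{\!2}\;=\;\sum_{k=1}^{n-1}|w_k|^2\;+\;2\,e_2(|\bold{w}|),
\]
and the analogous identity for $|\bold{z}|$. The idea is that the squared quantity $(\sum|w_k|)^2$ decomposes into exactly the two pieces that we already know how to control.

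First, since $\sum_{j=1}^{n}z_j=0$, Schoenberg's inequality in the form \eqref{s0} gives
\[
\sum_{k=1}^{n-1}|w_k|^2 \;\le\; \frac{n-2}{n}\sum_{j=1}^n|z_j|^2.
\]
Next, applying Theorem~\ref{ek} with $k=2$ (which does not use the centroid hypothesis) yields
\[
e_2(|\bold{w}|) \;\le\; \frac{n-2}{n}\,e_2(|\bold{z}|).
\]
Adding twice the second bound to the first, and invoking the identity above, I would conclude
\[
\Bigl(\sum_{k=1}^{n-1}|w_k|\Bigr)^{\!2} \;\le\; \frac{n-2}{n}\Bigl(\sum_{j=1}^n|z_j|^2 + 2\,e_2(|\bold{z}|)\Bigr) \;=\; \frac{n-2}{n}\Bigl(\sum_{j=1}^n|z_j|\Bigr)^{\!2}.
\]
Taking square roots then gives inequality~\eqref{st1}.

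There is essentially no obstacle once the algebraic decomposition is noticed: the whole argument is a two-line combination of an inequality of order~$2$ (Schoenberg) with an inequality on the second elementary symmetric function of moduli (Theorem~\ref{ek} at $k=2$), both of which are already in hand. The only subtlety worth flagging is that the centroid hypothesis $\sum z_j=0$ is used \emph{only} through Schoenberg's inequality; Theorem~\ref{ek} applies unconditionally. It is therefore natural to ask whether one could push the same idea further---iterating higher-order Schoenberg type bounds together with the estimates on $e_k(|\bold{w}|)$ to bound $\sum|w_k|^r$ for other values of $r$---but that question lies beyond the present statement.
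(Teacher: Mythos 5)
Your proof is correct and is essentially identical to the paper's: both expand $\bigl(\sum|w_k|\bigr)^2=\sum|w_k|^2+2e_2(|\bold{w}|)$ and then combine Schoenberg's inequality~\eqref{s0} for the quadratic term with the $k=2$ case of Theorem~\ref{ek} for the $e_2$ term. The only difference is cosmetic notation ($e_2(|\bold{w}|)$ versus $\sum_{i<j}|w_iw_j|$).
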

\begin{proof} It suffices to prove that
 \begin{eqnarray*}
\left(\sum_{k=1}^{n-1}\left|w_k\right|\right)^2 \leq    \frac{n-2}{n}  \left(\sum_{j=1}^n\left|z_j\right|\right)^2, \end{eqnarray*} which expands as
 \begin{eqnarray*}
\sum_{k=1}^{n-1}\left|w_k\right|^2+2  \sum_{1\leq i<j\leq n-1}\left|w_i w_j\right| \leq \frac{n-2}{n} \sum_{k=1}^n\left|z_k\right|^2+ \frac{2(n-2)}{n}\sum_{1\leq i<j\leq n}\left|z_i z_j\right|. \end{eqnarray*} 
Note that the case \( k = 2 \) of Theorem~\ref{ek} gives
$$\sum_{1\leq i<j\leq n-1}\left|w_i w_j\right| \leqslant \frac{n-2}{n} \sum_{1\leq i<j\leq n}\left|z_i z_j\right|.$$ Taking into account this and  Schoenberg's inequality (\ref{s0}), the desired result follows. 
\end{proof}
Inequality~\eqref{st1} is sharp. For example, take \( p(z) = z^{n - 2}(z^2 - 1) \) with \( n \ge 2 \). However, the condition that all zeros \( z_j \) lie on a straight line through the origin is not sufficient for equality to hold. For instance, when \( p(z) = (z^2 - 1)^2 \), the two sides of inequality~\eqref{st1} are not equal.
  
\begin{rem}
Pereira~\cite[Corollary~5.5]{Per03}, solving a conjecture of de Bruijn and Springer~\cite{BS47}, showed that the absolute value function in inequality~\eqref{bsen} can be replaced by a much more general convex function \( \Phi: \mathbb{C} \to \mathbb{R} \). It is natural to ask whether a similar generalization holds for inequality~\eqref{st1}.
\end{rem}

\section{A Special Case of Sendov’s Conjecture}\label{secc4}

Sendov's conjecture is one of the most longstanding open problems in the geometry of polynomials; see~\cite{Tao22} and the references therein. Since a rotation of the complex plane preserves the relative configuration of the zeros and critical points of a polynomial \( p(z) \), it suffices to consider the following reformulation of the conjecture.

\noindent{\bf Sendov's Conjecture.}
Let \( p(z) = (z - a) \prod_{j = 1}^{n - 1} (z - z_j) \) with \( 0 \le a \le 1 \), \( |z_j| \le 1 \) for \( j = 1, \ldots, n - 1 \), and \( n \ge 2 \). Then the closed disk \( |z - a| \le 1 \) contains a critical point of \( p(z) \).

In this section, we make use of Schoenberg's inequality  (\ref{s}) to prove a special case of Sendov's conjecture.

\begin{thm}\label{specialsc}Let \( p(z) = (z - a) \prod_{j = 1}^{n - 1} (z - z_j) \) with \( 0 \le a \le 1 \), \( |z_j| \le 1 \) for \( j = 1, \ldots, n - 1 \), and \( n \ge 2 \). If further, $\Re\sum_{j=1}^{n-1}z_j\ge \frac{n-2}{2}a$, then the open disk 
$|z-a| < 1$ contains a critical point of $p(z)$. 
\end{thm}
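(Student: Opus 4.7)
The plan is to argue by contradiction. Suppose that no critical point of $p(z)$ lies in the open disk $|z - a| < 1$; then $|w_k - a| \ge 1$ for every $k \in \{1,\ldots,n-1\}$, and summing yields
$$\sum_{k=1}^{n-1} |w_k - a|^2 \;\ge\; n - 1.$$
I will show that under the hypotheses this is incompatible with Schoenberg's inequality.

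The key move is to apply (\ref{s}) not to $p$ itself, but to the translate $p(z+a)$, whose zeros are $0,\, z_1 - a,\, \ldots,\, z_{n-1} - a$ and whose critical points are $w_1 - a,\, \ldots,\, w_{n-1} - a$. Since (\ref{s}) is valid for any polynomial without a centroid condition, this yields
$$\sum_{k=1}^{n-1} |w_k - a|^2 \;\le\; \frac{n-2}{n}\sum_{j=1}^{n-1}|z_j - a|^2 \;+\; \frac{1}{n^2}\Bigl|\sum_{j=1}^{n-1}(z_j - a)\Bigr|^2.$$

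I would then use the Cauchy--Schwarz bound $\bigl|\sum_{j=1}^{n-1}(z_j - a)\bigr|^2 \le (n-1)\sum_{j=1}^{n-1}|z_j - a|^2$ to collapse the right-hand side into a single sum, obtaining
$$\sum_{k=1}^{n-1}|w_k - a|^2 \;\le\; \frac{n^2 - n - 1}{n^2}\sum_{j=1}^{n-1}|z_j - a|^2.$$
Expanding $|z_j - a|^2 = |z_j|^2 - 2a\,\Re z_j + a^2$ and applying the two input bounds $|z_j|^2 \le 1$ and $\Re\sum_{j=1}^{n-1} z_j \ge \tfrac{(n-2)a}{2}$ yields
$$\sum_{j=1}^{n-1}|z_j - a|^2 \;\le\; (n-1) - (n-2)a^2 + (n-1)a^2 \;=\; (n-1) + a^2.$$

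Combining the last two displays gives
$$\sum_{k=1}^{n-1}|w_k - a|^2 \;\le\; \frac{(n^2 - n - 1)\bigl((n-1) + a^2\bigr)}{n^2},$$
and a short algebraic check shows
$$n - 1 \;-\; \frac{(n^2 - n - 1)\bigl((n-1) + a^2\bigr)}{n^2} \;=\; \frac{(n^2 - 1) - (n^2 - n - 1)a^2}{n^2} \;>\; 0$$
for every $n \ge 2$ and $a \in [0,1]$, since $(n^2 - n - 1)a^2 \le n^2 - n - 1 < n^2 - 1$. The resulting strict inequality $\sum|w_k - a|^2 < n-1$ contradicts the starting bound. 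I do not expect a substantial obstacle here; the only conceptual step is the translation $z \mapsto z + a$ before invoking Schoenberg's inequality, and the Cauchy--Schwarz step is what decouples the two terms on the right-hand side of (\ref{s}) so that the hypothesis on $\Re\sum z_j$ can be inserted cleanly.
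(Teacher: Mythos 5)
Your argument is correct, and it coincides almost exactly with the ``alternative and simpler proof'' the paper presents in Remark~\ref{remalternativeproof}: translate by $a$, apply~\eqref{s} to $p(z+a)$, absorb the centroid term into the main sum via Cauchy--Schwarz to get the $\tfrac{n^2-n-1}{n^2}$ factor, then feed in $|z_j|\le 1$ and $\Re\sum z_j \ge \tfrac{n-2}{2}a$ to conclude $\sum_k |w_k-a|^2 < n-1$. (Your intermediate bound $\sum_j|z_j-a|^2 \le (n-1)+a^2$ is marginally tighter than the paper's $\le n$, but the conclusion is the same.) The paper's \emph{main} proof of Theorem~\ref{specialsc} is structured differently --- it deduces $\sum_k |w_k-a|^{-2} > n-1$ by combining~\eqref{scs} with a second application of Cauchy--Schwarz to the weighted sum $\sum_k |w_k-a|^{-2}\bigl(a^2+\sum_j|z_j|^2\bigr)$, i.e.\ it works with the power mean $M_{-2}$ rather than $M_2$. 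That version gives a formally sharper conclusion (a lower bound on the sum of inverse squares), which the authors use in the closing discussion to motivate negative-order Schoenberg inequalities, but for the theorem as stated your $M_2$ route is fully adequate and is the one the paper itself singles out as simpler.
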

\begin{proof}Let \( w_1, \ldots, w_{n - 1} \) be the critical points of \( p(z) \), and define \( q(z) := p(z + a) \). Applying inequality~\eqref{s} to \( q(z) \) and then using the Cauchy--Schwarz inequality, we obtain
\begin{eqnarray}\label{scs}
 \sum_{k=1}^{n-1}|w_k-a|^2&\le&   \frac{n-2}{n}\sum_{j=1}^{n-1}|z_{j}-a|^2+\frac{1}{n^2}\left|\sum_{j=1}^{n-1}(z_{j}-a)\right|^2 \nonumber \\&\le &\frac{n^2 - n -1}{n^2}\sum_{j=1}^{n-1}|z_{j}-a|^2.
\end{eqnarray}
If $w_k=a$ for some $k$, there is nothing to prove; we assume otherwise. Now 
\begin{eqnarray*}
 &&\sum_{k=1}^{n-1}|w_k-a|^{-2}\left(a^2+\sum_{j=1}^{n-1}|z_{j}|^2\right)\\&=&    
 \sum_{k=1}^{n-1}|w_k-a|^{-2}\left(a^2+\sum_{j=1}^{n-1}|z_{j}-a+a|^2\right)
 \\&=&    
  \sum_{k=1}^{n-1}|w_k-a|^{-2}\left(na^2+2a\Re\sum_{j=1}^{n-1}(z_{j}-a)+\sum_{j=1}^{n-1}|z_{j}-a|^2\right)
 \\&\ge&  \sum_{k=1}^{n-1}|w_k-a|^{-2}\left(na^2+2a\Re\sum_{j=1}^{n-1}(z_{j}-a)+\frac{n^2}{n^2 - n -1}\sum_{k=1}^{n-1}|w_{k}-a|^2\right) \\&\ge&  \sum_{k=1}^{n-1}|w_k-a|^{-2}\left( \frac{n^2}{n^2 - n -1}\sum_{k=1}^{n-1}|w_{k}-a|^2\right)\\&\ge& 
 \frac{n^2(n-1)^2}{n^2 - n -1} >n(n-1) ,
 \end{eqnarray*}where the first inequality follows from~\eqref{scs}, the second from the given condition, and the third from the Cauchy-Schwarz inequality. Therefore, \begin{eqnarray*}
   \sum_{k=1}^{n-1}|w_k-a|^{-2}\left(a^2+\sum_{j=1}^{n-1}|z_{j}|^2\right) > n(n-1). \end{eqnarray*} Since \( a^2 + \sum_{j = 1}^{n - 1} |z_j|^2 \le n \), it follows that
\begin{equation}\label{c1}
\sum_{k = 1}^{n - 1} |w_k - a|^{-2} > n - 1.
\end{equation}
In particular, there exists a \( k \) such that \( |w_k - a| < 1 \), as desired. 
\end{proof}

We end this section with a few remarks.


\begin{rem}\label{remalternativeproof}
An alternative and simpler proof of Theorem~\ref{specialsc} proceeds from inequality~\eqref{scs}. Observe that
\begin{eqnarray*} 
 \sum_{k=1}^{n-1}|w_k-a|^2&\le&    \frac{n^2 - n -1}{n^2}\sum_{j=1}^{n-1}|z_{j}-a|^2\\&= & \frac{n^2 - n -1}{n^2}\left(\sum_{j=1}^{n-1}|z_j|^2-2a\Re \sum_{j=1}^{n-1}z_j+(n-1)a^2\right) \\&\le & \frac{n^2 - n -1}{n^2}\left(n-2a\Re \sum_{j=1}^{n-1}z_j+(n-2)a^2\right) < n-1,
\end{eqnarray*}
where the final inequality follows from the assumption \( \Re \sum_{j=1}^{n-1} z_j \ge \frac{n - 2}{2} a \). Therefore,
\begin{equation}\label{c2}
\sum_{k = 1}^{n - 1} |w_k - a|^2 < n - 1,
\end{equation}
which implies that there exists a \( k \) such that \( |w_k - a| < 1 \), as desired. \qed
\end{rem}

The two different proofs of Theorem~\ref{specialsc} presented above were motivated by the following observation. Let \( x_1, \ldots, x_m \) be positive real numbers. Their power mean with exponent \( p \) is defined as \(M_p(x_1, \ldots, x_m) = \left( \frac{1}{m} \sum_{i=1}^m x_i^p \right)^{1/p}\). We denote \( M_p := M_p(|w_1 - a|, \ldots, |w_{n-1} - a|) \) for brevity. Sendov's conjecture is equivalent to proving that
\[
\min_{1 \le k \le n-1} |w_k - a| \le 1,
\]
which is the same as
\[
M_{-\infty} \le 1.
\]
By the monotonicity of power means (i.e., \( M_p \le M_q \) for \( p < q \)), we obtain
\[
M_{-\infty} \le M_{-2} \le M_{2},
\]
where \( M_{-2} \) and \( M_2 \) correspond to inequalities~\eqref{c1} and~\eqref{c2}, respectively.

Without the additional assumption \( \Re \sum_{j = 1}^{n - 1} z_j \ge \frac{n - 2}{2}a \), the inequality \( M_2 \le 1 \) does not hold in general. For instance, consider the polynomial \( p(z) = (z - a)(z + 1)^{n - 1} \) with \( a \) close to \( 1 \), where the bound fails.

On the other hand, it remains unclear whether the inequality \( M_{-2} \le 1 \) always holds. Despite considerable numerical testing, we have not been able to identify a counterexample. It is possible that this inequality represents a stronger conjecture than Sendov's original conjecture. This observation also suggests that, if possible, establishing Schoenberg type inequalities of negative order, particularly of order \( -2 \), may offer interesting insight into the resolution of Sendov's conjecture.

\section{Comments}\label{secc5}
In~\cite[Theorem~3.2]{LXZ21}, the authors established an inequality for general powers \( r \ge 2 \) and explicitly posed the question of whether further improvements are possible.
\begin{thm}[\cite{LXZ21}]\label{psum} Let \( z_1, z_2, \ldots, z_n \) be the zeros of \( p(z) \), and let \( w_1, w_2, \ldots, w_{n-1} \) be its critical points. Then for $r\ge 2$, \begin{eqnarray}\label{lxz}
  \sum_{k=1}^{n-1}\left|w_{k}\right|^{r} \le \dfrac{(n-1)^{r-2}}{n^r}\left|\sum_{j=1}^{n}z_{j}\right|^r+\dfrac{(n-1)^{r-2}(n-2)}{n^{r/2}}\left( \sum_{j=1}^{n}|z_{j}|^2\right) ^{r/2}.
  \end{eqnarray} \end{thm}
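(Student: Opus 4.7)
The plan is to bootstrap from Schoenberg's inequality~\eqref{s} (the $r=2$ case) to general $r\ge 2$ via an elementary power reduction followed by a single convexity split; the case $n=2$ reduces to $w_1=(z_1+z_2)/2$ and agrees with~\eqref{lxz} by inspection, so I may assume $n\ge 3$.

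\emph{Starting estimate.} First I would use that for $r\ge 2$ and each $k$, $|w_k|^2\le \sum_j|w_j|^2$, so $|w_k|^{r-2}\le \bigl(\sum_j|w_j|^2\bigr)^{(r-2)/2}$. Multiplying by $|w_k|^2$ and summing over $k$ gives
\[
\sum_{k=1}^{n-1}|w_k|^r \le \Bigl(\sum_{k=1}^{n-1}|w_k|^2\Bigr)^{r/2}.
\]
Schoenberg's inequality~\eqref{s} would then bound the inner sum by $A+B$, where $A:=\tfrac{n-2}{n}\sum_j|z_j|^2$ and $B:=\tfrac{1}{n^2}\bigl|\sum_j z_j\bigr|^2$.

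\emph{Convexity split.} Since $x\mapsto x^{r/2}$ is convex for $r\ge 2$, Jensen's inequality applied to the two-point distribution placing mass $\lambda$ at $A/\lambda$ and mass $1-\lambda$ at $B/(1-\lambda)$ gives, for any $\lambda\in(0,1)$,
\[
(A+B)^{r/2} \le \lambda^{1-r/2}A^{r/2} + (1-\lambda)^{1-r/2}B^{r/2}.
\]
The crux of the argument is to choose $\lambda$ so that the two resulting coefficients line up with~\eqref{lxz}. I propose $\lambda=(n-2)/(n-1)^2$; a short direct calculation then yields
\[
\lambda^{1-r/2}A^{r/2} = \frac{(n-1)^{r-2}(n-2)}{n^{r/2}}\Bigl(\sum_j|z_j|^2\Bigr)^{r/2},
\]
matching the second summand of~\eqref{lxz} exactly. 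With this $\lambda$ one has $1-\lambda=(n^2-3n+3)/(n-1)^2$, and the elementary polynomial inequality $n^2-3n+3\ge 1$, equivalent to $(n-1)(n-2)\ge 0$ and hence valid for $n\ge 2$, implies $(1-\lambda)^{1-r/2}\le (n-1)^{r-2}$, and therefore
\[
(1-\lambda)^{1-r/2}B^{r/2} \le \frac{(n-1)^{r-2}}{n^r}\Bigl|\sum_j z_j\Bigr|^r,
\]
which is the first summand of~\eqref{lxz}. Chaining the displayed estimates establishes the theorem.

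\emph{Main obstacle and remark.} The only nontrivial step is isolating a weight $\lambda$ that hits both prescribed coefficients simultaneously: the tempting ``optimal'' choice $\lambda=A/(A+B)$ would return only the trivial identity $(A+B)^{r/2}=(A+B)^{r/2}$, so a polynomial-independent choice as a function of $n$ alone is essential. In fact $(1-\lambda)^{1-r/2}<(n-1)^{r-2}$ strictly whenever $n\ge 3$ and $r>2$, so the plan actually produces a slightly sharper inequality than~\eqref{lxz} in those cases, consistent with the authors' observation that~\eqref{lxz} is not sharp even at $r=4$.
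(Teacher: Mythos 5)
Correct, and essentially the same route the paper itself takes when it shows the Proposition's bound~\eqref{impro} implies~\eqref{lxz}: reduce $\sum_k|w_k|^r$ to $\bigl(\sum_k|w_k|^2\bigr)^{r/2}$ via $\ell^r\le\ell^2$, apply Schoenberg's inequality~\eqref{s} to get $(A+B)^{r/2}$, then split by convexity. Your only deviation is the tailored weight $\lambda=(n-2)/(n-1)^2$, which makes the coefficient of $A^{r/2}$ land exactly and leaves one inequality $(n-1)(n-2)\ge 0$ to check, whereas the paper uses the symmetric weight $\lambda=\tfrac12$ and verifies the two resulting coefficients separately; both choices yield the same strict improvement over~\eqref{lxz} for $n\ge 3$, $r>2$.
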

We now present the following improvement, which provides a tighter bound under the same conditions.
\begin{prop}
Under the same conditions as in Theorem~\ref{psum}, the following inequality holds for all \( r \ge 2 \):
\begin{equation}\label{impro}
\sum_{k = 1}^{n - 1} |w_k|^r \le \left( \frac{n - 2}{n} \sum_{j = 1}^n |z_j|^2 + \frac{1}{n^2} \left| \sum_{j = 1}^n z_j \right|^2 \right)^{r/2}.
\end{equation}
\end{prop}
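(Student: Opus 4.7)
The plan is to reduce the claim directly to Schoenberg's inequality in its sharp form~\eqref{s}. Write
\[
B := \frac{n-2}{n}\sum_{j=1}^n |z_j|^2 + \frac{1}{n^2}\left|\sum_{j=1}^n z_j\right|^2,
\]
so that inequality~\eqref{s} reads $\sum_{k=1}^{n-1}|w_k|^2 \le B$, and the bound to be proved is $\sum_{k=1}^{n-1}|w_k|^r \le B^{r/2}$. Setting $a_k = |w_k|^2 \ge 0$ and $s = r/2 \ge 1$, the claim becomes the purely elementary assertion
\[
\sum_{k=1}^{n-1} a_k^{s} \le \left(\sum_{k=1}^{n-1} a_k\right)^{s} \qquad (s \ge 1,\ a_k \ge 0),
\]
combined with the monotonicity of $x \mapsto x^{s}$ for $x \ge 0$.

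The elementary inequality is standard (it is simply the nesting $\|\cdot\|_s \le \|\cdot\|_1$ of $\ell^p$-norms), and I would give a one-line justification: by homogeneity one may normalize $\sum_k a_k = 1$, whence each $a_k \le 1$ and, since $s \ge 1$, $a_k^{s} \le a_k$; summing gives the result. Applying this with $a_k = |w_k|^2$ yields
\[
\sum_{k=1}^{n-1} |w_k|^r = \sum_{k=1}^{n-1} (|w_k|^2)^{s} \le \left(\sum_{k=1}^{n-1}|w_k|^2\right)^{s},
\]
and then feeding in Schoenberg's inequality $\sum_k |w_k|^2 \le B$ together with the monotonicity of $t \mapsto t^{s}$ on $[0,\infty)$ gives $\sum_k |w_k|^r \le B^{s} = B^{r/2}$, which is exactly~\eqref{impro}.

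There is essentially no obstacle: once one observes that the right-hand side of~\eqref{impro} is precisely the $(r/2)$-th power of the Schoenberg bound $B$, the result is an immediate consequence of~\eqref{s} and an $\ell^p$-nesting inequality. The only thing worth remarking on is the comparison with~\eqref{lxz}: since Schoenberg's inequality is sharper than the Cauchy--Schwarz-type estimate used to derive~\eqref{lxz}, the bound~\eqref{impro} is automatically tighter, which is the content the proposition is meant to convey.
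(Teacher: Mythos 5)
Your proof is correct and takes essentially the same route as the paper: both reduce~\eqref{impro} to Schoenberg's inequality~\eqref{s} together with the $\ell^p$-nesting inequality $\|\cdot\|_r \le \|\cdot\|_2$ for $r \ge 2$ (which the paper invokes under the name of Jensen's inequality, and which you phrase equivalently as $\sum a_k^{s} \le (\sum a_k)^{s}$ with $a_k = |w_k|^2$, $s = r/2$).
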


 \begin{proof}
 By Jensen's inequality and (\ref{s}), we  have 
    \begin{eqnarray*}\left(\sum_{k=1}^{n-1}\left|w_{k}\right|^r\right)^{1/r} \le \left( \sum_{k=1}^{n-1}\left|w_{k}\right|^2\right)^{1/2} \le \left(\frac{n-2}{n}\sum_{j=1}^{n}|z_{j}|^2+\frac{1}{n^2}\left|\sum_{j=1}^{n}z_{j}\right|^2\right)^{1/2},
     \end{eqnarray*}  thus  (\ref{impro}) follows.
 \end{proof}
  When $n=2$, (\ref{impro}) and (\ref{lxz}) coincide.   To show  (\ref{impro}) is tighter than (\ref{lxz}) for $n\ge 3$, we note, by the monotonicity of power means, that \begin{eqnarray*} \left(\frac{n-2}{n}\sum_{j=1}^{n}|z_{j}|^2+\frac{1}{n^2}\left|\sum_{j=1}^{n}z_{j}\right|^2\right)^{r/2}\le 2^{r/2-1}\left(\frac{(n-2)^{r/2}}{n^{r/2}}\left( \sum_{j=1}^{n}|z_{j}|^2\right) ^{r/2}+\frac{1}{n^r}\left|\sum_{j=1}^{n}z_{j}\right|^r\right).  \end{eqnarray*} It remains to verify ${\displaystyle 2^{r/2-1}\frac{(n-2)^{r/2}}{n^{r/2}}\le \dfrac{(n-1)^{r-2}(n-2)}{n^{r/2}}}$ and ${\displaystyle\frac{2^{r/2-1}}{n^r}\le \dfrac{(n-1)^{r-2}}{n^r}}$, which is a simple exercise.

\section*{Acknowledgments}

I would like to express my sincere gratitude to Prof.~Minghua Lin for numerous helpful discussions and valuable suggestions. I also thank Yuxiang Huang for insightful comments on an earlier version of this paper.

\section*{Appendix} 

\noindent {\it Proof of ($\star$).} We now compute \( \operatorname{tr}\left((S D^* S D)^3\right) \) and show that it equals the right-hand side of inequality~($\star$). First, we compute:\begin{eqnarray*}
S D^* S D
&=& \left( I - \frac{1}{n}J \right) D^* \left( I - \frac{1}{n}J \right) D \\
&=& \left( D^* - \frac{1}{n} JD^*  \right) \left( D - \frac{1}{n} JD \right) \\
&=& |D|^2 - \frac{1}{n} D^* JD - \frac{1}{n} J|D|^2.
\end{eqnarray*}
Consequently,
\begin{eqnarray*}
(S D^* S D)^2 
&=& \left( |D|^2 - \frac{1}{n} D^* JD - \frac{1}{n} J|D|^2 \right)^2 \\
&=& |D|^4 - \frac{1}{n} |D|^2 D^* JD - \frac{1}{n} |D|^2J|D|^2 - \frac{1}{n} D^* JD |D|^2 \\
&&+ \frac{1}{n^2} D^* J |D|^2 J D - \frac{1}{n} J|D|^4 + \frac{1}{n^2} J |D|^2 D^* JD + \frac{1}{n^2} J |D|^2 J|D|^2.
\end{eqnarray*}
Therefore,
\begin{eqnarray*}
\tr\left( (S D^* S D)^3 \right) 
&=& \tr\left(\left( |D|^2 - \frac{1}{n} D^* JD - \frac{1}{n} J |D|^2 \right) \times \right.\\
&& \left( |D|^4 - \frac{1}{n} |D|^2 D^* JD - \frac{1}{n} |D|^2 J |D|^2 - \frac{1}{n} D^* JD |D|^2 + \frac{1}{n^2} D^* J |D|^2 J D \right. \\
&& \left.\left. - \frac{1}{n} J |D|^4 + \frac{1}{n^2} J |D|^2 D^* JD + \frac{1}{n^2} J |D|^2 J |D|^2 \right)\right) \\
&=& \tr\left(|D|^6 - \frac{1}{n} |D|^4 D^* JD  - \frac{1}{n} |D|^4 J |D|^2  - \frac{1}{n} |D|^2 D^* JD |D|^2  + \frac{1}{n^2} |D|^2 D^* J |D|^2 J D \right.\\
&&  \left.- \frac{1}{n} |D|^2 J |D|^4 + \frac{1}{n^2} |D|^2 J |D|^2 D^* JD + \frac{1}{n^2}|D|^2J|D|^2J|D|^2 - \frac{1}{n} D^* JD |D|^4 \right.\\
&&  \left.+ \frac{1}{n^2} D^* J|D|^4 JD + \frac{1}{n^2} D^* JD |D|^2 J |D|^2  + \frac{1}{n^2} D^* J |D|^2 J D |D|^2 \right.\\
&&  \left.- \frac{1}{n^3} D^* J |D|^2 J |D|^2JD   - \frac{1}{n} J |D|^6   + \frac{1}{n^2} J |D|^4 J |D|^2   + \frac{1}{n^2} J |D|^2 D^* JD |D|^2  \right.\\
&&  \left.+ \frac{1}{n^2} J |D|^2 J |D|^4  - \frac{1}{n^3} J |D|^2 J |D|^2 J |D|^2 \right)\\&=& \tr \left( |D|^6 \right) - \frac{1}{n} \tr \left( |D|^6 \right) - \frac{1}{n} \tr \left( |D|^6 \right) - \frac{1}{n} \tr \left( |D|^6 \right) \\
&& + \frac{1}{n^2} \tr \left( |D|^2 \right) \tr \left( |D|^4 \right) - \frac{1}{n} \tr \left( |D|^6 \right) + \frac{1}{n^2} \tr \left( |D|^2 D^* \right) \tr \left( |D|^2 D \right) \\
&& + \frac{1}{n^2} \tr \left( |D|^2 \right) \tr \left( |D|^4 \right) - \frac{1}{n} \tr \left( |D|^6 \right) + \frac{1}{n^2} \tr \left( |D|^2 \right) \tr \left( |D|^4 \right) \\
&& + \frac{1}{n^2} \tr \left( |D|^2 D \right) \tr \left( |D|^2 D^* \right) + \frac{1}{n^2} \tr \left( |D|^2 \right) \tr \left( |D|^4 \right) - \frac{1}{n^3} \left(\tr  |D|^2 \right)^3 \\
&& - \frac{1}{n} \tr \left( |D|^6 \right) + \frac{1}{n^2} \tr \left( |D|^4 \right) \tr \left( |D|^2 \right) + \frac{1}{n^2} \tr \left( |D|^2 D^* \right) \tr \left( |D|^2 D \right) \\
&& + \frac{1}{n^2} \tr \left( |D|^4 \right) \tr \left( |D|^2 \right) - \frac{1}{n^3} \left(\tr  |D|^2 \right)^3\\
&=&\frac{n-6}{n}\left(\sum_{j=1}^n\left|z_j\right|^6\right)+\frac{6}{n^2}\left(\sum_{j=1}^n\left|z_j\right|^4\right)\left(\sum_{j=1}^n\left|z_j\right|^2\right)\\&& +\frac{3}{n^2}\left|\sum_{j=1}^n z_j|z_j|^2\right|^2-\frac{2}{n^3}\left(\sum_{j=1}^n\left|z_j\right|^2\right)^3.
\end{eqnarray*}\qed

\noindent {\it Proof of  ($\star\star$).} We now compute \( \operatorname{tr}\left( (A^*)^3 A^3 \right) \), as claimed in inequality~($\star\star$). First, we compute:
\begin{eqnarray*} \tr\left( \left(A^*\right)^3A^3\right)&=&\tr\left( (SD^*S)^3(SDS)^3\right)   \\&=&   \tr\left( \left[\left(I-\frac{1}{n}J\right)D^*\left(I-\frac{1}{n}J\right)D^*\left(I-\frac{1}{n}J\right)D^*\right]\right.\\&&\left.\qquad \times \left[\left(I-\frac{1}{n}J\right)D\left(I-\frac{1}{n}J\right)D\left(I-\frac{1}{n}J\right)D\right]\right). \end{eqnarray*}
Now,  \begin{eqnarray*} &&\left(I-\frac{1}{n}J\right)D\left(I-\frac{1}{n}J\right)D\left(I-\frac{1}{n}J\right)D\\&=& \left(D^2-\frac{1}{n}DJD-\frac{1}{n}JD^2\right)\left(D-\frac{1}{n}JD\right)\\&=&  D^3-\frac{1}{n}DJD^2-\frac{1}{n}JD^3-\frac{1}{n}D^2JD+\frac{1}{n^2}JD^2JD. \end{eqnarray*}
Therefore, \begin{eqnarray*} &&\tr\left(\left[\left(I-\frac{1}{n}J\right)D^*\left(I-\frac{1}{n}J\right)D^*\left(I-\frac{1}{n}J\right)D^*\right]\right.\\&&\left.\qquad \times \left[\left(I-\frac{1}{n}J\right)D\left(I-\frac{1}{n}J\right)D\left(I-\frac{1}{n}J\right)D\right]\right)\\&=&\tr \left(\left(D^{*3}-\frac{1}{n}D^*JD^{*2}-\frac{1}{n}JD^{*3}-\frac{1}{n}D^{*2}JD^*+\frac{1}{n^2}JD^{*2}JD^*\right)\right. \\&&\left.\qquad \times \left(D^3-\frac{1}{n}DJD^2-\frac{1}{n}JD^3-\frac{1}{n}D^2JD+\frac{1}{n^2}JD^2JD\right)\right) \\&=& \tr\left(|D|^6 - \frac{1}{n} D^{*2} |D|^2 JD^2 - \frac{1}{n}D^{*3} J D^3 - \frac{1}{n} D^*|D|^4 JD + \frac{1}{n^2} D^{*3} J D^2 JD \right. \\
&&\left. - \frac{1}{n}D^* J|D|^4 D+ \frac{1}{n^2} D^*JD^* |D|^2 JD^2 + \frac{1}{n^2} D^* JD^{*2} J D^3 + \frac{1}{n^2} D^* J |D|^4 JD-\frac{1}{n^3}D^* J D^{*2} J D^2 JD\right. \\
&&\left. - \frac{1}{n} J |D|^6  + \frac{1}{n^2} JD^{*2}|D|^2 JD^2 + \frac{1}{n^2} J D^{*3} JD^3 \right. \\
&&\left. - \frac{1}{n}D^{*2} J|D|^2 D^2 + \frac{1}{n^2} D^{*2} J|D|^2  JD^2 + \frac{1}{n^2}D^{*2} J |D|^2 D JD\right. \\
&& \left.+ \frac{1}{n^2} J D^{*2} J|D|^2 D^2 - \frac{1}{n^3} JD^{*2} J |D|^2 J D^2 \right)\\&=& \tr(|D|^6) - \frac{1}{n} \tr(|D|^6) - \frac{1}{n} \tr(|D|^6) - \frac{1}{n} \tr(|D|^6)  + \frac{1}{n^2} \tr(D^2) \tr(|D|^2 D^{*2}) \\
&& - \frac{1}{n} \tr(|D|^6) + \frac{1}{n^2} |\tr(|D|^2 D)|^2 + \frac{1}{n^2} \tr(D^{*2}) \tr(|D|^2 D^2)+ \frac{1}{n^2} \tr(|D|^4) \tr(|D|^2)- \frac{1}{n^3} |\tr(D^2)|^2 \tr(|D|^2) \\
&&  - \frac{1}{n} \tr(|D|^6) + \frac{1}{n^2} \tr(D^{*2}|D|^2) \tr(D^2) + \frac{1}{n^2} |\tr(D^3)|^2 \\
&& - \frac{1}{n} \tr(|D|^6)+ \frac{1}{n^2} \tr(|D|^2) \tr(|D|^4)+ \frac{1}{n^2} |\tr(|D|^2 D)|^2\\
&& + \frac{1}{n^2} \tr(|D|^2 D^2) \tr(D^{*2})- \frac{1}{n^3} \tr(|D|^2) |\tr(D^2)|^2\\&=& \frac{n-6}{n} \left( \sum_{j=1}^{n} |z_j|^6 \right) + \frac{2}{n^2} \left( \sum_{j=1}^{n} |z_j|^4 \right) \left( \sum_{j=1}^{n} |z_j|^2 \right)+ \frac{4}{n^2} \Re \left\{ \left( \sum_{j=1}^{n} z_j^2 |z_j|^2 \right) \left( \sum_{j=1}^{n} \bar{z_j}^2 \right) \right\} \\
&& + \frac{2}{n^2} \left| \sum_{j=1}^{n} z_j |z_j|^2 \right|^2 + \frac{1}{n^2} \left| \sum_{j=1}^{n} z_j^3 \right|^2 - \frac{2}{n^3} \left( \sum_{j=1}^{n} |z_j|^2 \right) \left| \sum_{j=1}^{n} z_j^2 \right|^2.
\end{eqnarray*}\qed

\end{document}